\theoremstyle{plain}
\newtheorem{thm}{Theorem}[subsection]
\newtheorem{lem}[thm]{Lemma}
\theoremstyle{definition}
\newtheorem{defn}[thm]{Definition}
\newcommand{\F}{\mathcal{F}}
\def\Z{\mathbb{Z}}
\def\z0{\mathbb{Z}_{\geq0}}
\def\N{\mathbb{N}}
\def\F{\mathbb{F}}
\def\Fn{\F[x_1,\ldots,x_n]}
\def\bu{\mathbf{u}}
\def\ba{\mathbf{a}}
\def\br{\mathbf{r}}
\begin{document}

\title{A Newton-Girard Formula for Monomial Symmetric Polynomials}
\author{Samuel Chamberlin}
\address{Department of Mathematics and Statistics\\
Park University\\
Parkville, MO 64152}
\email{samuel.chamberlin@park.edu}
\author{Azadeh Rafizadeh}
\address{Department of Physics and Mathematics\\
William Jewell College\\
Liberty, MO 64068}
\email{rafizadeha@william.jewell.edu}

\begin{abstract}
The Newton-Girard Formula allows one to write any elementary symmetric polynomial as a sum of products of power sum symmetric polynomials and elementary symmetric polynomials of lesser degree. It has numerous applications. We have generalized this identity by replacing the elementary symmetric polynomials with monomial symmetric polynomials.
\end{abstract}

\maketitle

\section{Introduction}

Newton's identity for elementary symmetric polynomials (also called the Newton-Girard identity) has been applied in many areas of mathematics. For some recent applications see \cite{aa,gal,ho,ros,wal}.

The elementary symmetric polynomials are monomial symmetric polynomials where the partition is of the form $(1,\ldots,1)$. %A simple variable replacement extends Newton's Identity to the case $(a,\ldots,a)$ for any $a\in\mathbb N$. 
It seems natural to further generalize these identities by replacing the elementary symmetric polynomials with monomial symmetric polynomials. The purpose of this work is to prove such a generalization. The structure of the proof follows that in \cite{mead}.

In Section 2, we state the necessary definitions and the Newton-Girard Formula. In Section 3, we state and prove our generalization. In Section 4, we give an application to Lie algebras.

{\em Acknowledgements: We would like to thank Emily Scott and Macy Tush for many helpful calculations. We would also like to thank Ole Warnaar for suggesting the notation in the theorem.}

\section{Preliminaries}

Define $\N$ to be the set of positive integers and $\z0$ to be the set of nonnegative integers. So $\z0=\N\cup\{0\}$. Let $\F$ be a field and, for $n\in\N$, let $\Fn$ be the polynomial ring in $n$ indeterminates over $\F$. The symmetric group $S_n$ acts on $\Fn$ by permuting the variables. Let $R$ be the invariant subring, which is the ring of symmetric polynomials in $n$ indeterminates over $\F$.

A \emph{partition} is a sequence of nonnegative integers in non-increasing order, which contains only finitely many zero terms. The non-zero terms of a partition are called the \emph{parts}. If two partitions have the same parts we say they are the same. Given a partition $\lambda$ we define the \emph{length}, $l(\lambda)$ to be the number of parts of $\lambda$. If $a_1,\ldots,a_k,r_1,\ldots,r_k\in\N$ and $a_1>a_2>\dots>a_k$ define $\left(a_1^{r_1},\ldots,a_k^{r_k}\right)$ to be the partition having $r_i$ parts equal to $a_i$ for each $i\in\{1,\ldots,k\}$. Note that in this case $l(\lambda)=\sum r_i$. Given $\bu:=\left(u_1,\ldots,u_k\right)\in\Z^k$ define $|\bu|=\sum_{i=1}^k u_i$.

\begin{defn}
	Let $\lambda$ be a partition with $l(\lambda)\leq n$. Adding zeros if $l(\lambda)<n$, write $\lambda=(b_1,\ldots,b_n)$ with $b_1\geq\dots\geq b_n\geq0$. Then define the \emph{monomial symmetric polynomial given by $\lambda$ in $n$ variables} by 											$$m_{\lambda}=\sum_{\substack{\left(c_1,\ldots,c_n\right)\in\z0^n\\\textnormal{a nontrivial permutation of }\left(b_1,\ldots,b_n\right)}}x_1^{c_1}\dots x_n^{c_n}\in R.$$
    If $l(\lambda)>n$ we define $m_\lambda:=0$.
\end{defn}

\begin{defn}
	Let $k\in\N$ be given.
	\begin{enumerate}
    	\item Define the \emph{power sum symmetric polynomial with degree 			$k$} by 
    	$$p_{k}:=m_{(k)}=\sum_{i=1}^nx_i^k.$$
    
    	\item Define the \emph{elementary symmetric polynomial of degree $k$} 			by 
   		$$e_{k}:=m_{\left(1^k\right)}=\sum_{1\leq i_1<i_2<\dots<i_k\leq 			n}x_{i_1}\dots x_{i_k}.$$
    \end{enumerate}
\end{defn}

\subsection{Newton-Girard Formula}

The following formula, originally proven by Newton, has subsequently been proven in numerous ways. Two very different proofs can be found in \cite{mac} and \cite{mead}.

\begin{thm}[Newton-Girard Formula]\label{NGthm}
	Let $k\in\N$, then 
	\begin{equation}\label{NGeq}
    	ke_{k}=\sum_{i=1}^k(-1)^{i-1}p_{i}e_{k-i}.
    \end{equation}
\end{thm}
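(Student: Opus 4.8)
The plan is to give a generating-function proof, which I find the cleanest. First I would introduce the generating polynomial
\[
E(t):=\prod_{i=1}^{n}(1+x_it)=\sum_{k=0}^{n}e_kt^{k},
\]
the right-hand equality being the standard expansion of the product; here $e_0:=1$ and, consistently with the convention $m_\lambda=0$ when $l(\lambda)>n$, we set $e_k:=0$ for $k>n$. All subsequent manipulations take place in the formal power series ring $\Fn[[t]]$, in which $E(t)$ and each factor $1+x_it$ are units since their constant terms equal $1$.

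Next I would compute the logarithmic derivative $E'(t)/E(t)$ in two ways. From the product form,
\[
\frac{E'(t)}{E(t)}=\sum_{i=1}^{n}\frac{x_i}{1+x_it}=\sum_{i=1}^{n}\sum_{m\ge0}(-1)^{m}x_i^{m+1}t^{m}=\sum_{m\ge0}(-1)^{m}p_{m+1}t^{m},
\]
where the middle equality is the geometric expansion in $\Fn[[t]]$ and the last uses $\sum_{i=1}^n x_i^{m+1}=p_{m+1}$. Clearing the denominator gives the single identity
\[
E'(t)=E(t)\sum_{m\ge0}(-1)^{m}p_{m+1}t^{m}\qquad\text{in }\Fn[[t]].
\]

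Finally, for a fixed $k\in\N$ I would compare the coefficients of $t^{k-1}$ on the two sides. On the left, $[t^{k-1}]E'(t)=k\,[t^{k}]E(t)=ke_k$ (this is $0=k\cdot0$ when $k>n$). On the right, the Cauchy product gives
\[
[t^{k-1}]\Big(E(t)\sum_{m\ge0}(-1)^{m}p_{m+1}t^{m}\Big)=\sum_{m=0}^{k-1}(-1)^{m}p_{m+1}e_{k-1-m}=\sum_{i=1}^{k}(-1)^{i-1}p_ie_{k-i},
\]
the last step being the reindexing $i=m+1$. Equating the two expressions yields \eqref{NGeq}.

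There is no real obstacle in this argument; the only points deserving care are (a) checking that the formal-series manipulations are legitimate --- invertibility of $E(t)$ and of $1+x_it$, and the fact that the geometric expansion of $x_i/(1+x_it)$ has well-defined coefficients in $\Fn$ --- and (b) confirming the degenerate case $k>n$, where both sides vanish termwise under the convention $e_j=0$ for $j>n$. A purely combinatorial alternative, nearer in spirit to \cite{mead} and to the monomial generalization to come, would instead expand each product $p_ie_{k-i}$ as a sum of monomials $x_j^{\,i}\prod_{s\in S}x_s$ with $|S|=k-i$ and build a sign-reversing involution cancelling every monomial except those that assemble into $ke_k$; the main difficulty there is specifying the involution precisely, so I would favor the generating-function proof above.
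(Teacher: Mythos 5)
Your generating-function argument is correct: the logarithmic-derivative computation, the formal-series justifications (invertibility of $E(t)$ and of each $1+x_it$ in $\Fn[[t]]$), and the coefficient extraction at $t^{k-1}$ are all sound, and the degenerate case $k>n$ is handled consistently with $e_j=0$ for $j>n$. Note, however, that the paper does not prove Theorem \ref{NGthm} at all --- it cites \cite{mac} and \cite{mead} --- and the route it actually leans on is Mead's: expand each product $p_ie_{k-i}$ into monomial symmetric polynomials and watch the alternating sum telescope. That is precisely the strategy the paper upgrades in Section 3, where the Lemma expands $p_am_\lambda$ and the proof of Theorem \ref{mainthm} recovers \eqref{NGeq} as the special case $\lambda=\left(1^k\right)$ (one distinct part, $a_1=1$, $r_1=k$, so the multinomial coefficients are all $1$). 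The trade-off is worth naming: your proof is shorter and cleaner for the classical identity because $E(t)=\prod_i(1+x_it)$ packages all the $e_k$ into a single product, but the monomial symmetric polynomials $m_\lambda$ admit no such product generating function, so the generating-function method does not extend to the paper's main theorem, whereas the combinatorial expansion does. Your closing remark correctly identifies this; the ``sign-reversing involution'' you defer is, in effect, carried out by the cancellation bookkeeping in the paper's proof of Theorem \ref{mainthm}.
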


%If we replace $x_i$ with $x_i^a$ in \eqref{NGeq} we obtain the following generalization.

%\begin{cor}
%	For all $a,k\in\N$
%	\begin{equation}
%    	km_{\left(a^k\right)}=\sum_{i=1}^k(-1)^{i-1}p_{ai}m_{\left(a^{k-			i}\right)}.
%    \end{equation}
%\end{cor}

\section{A Generalization to the Monomial Symmetric Polynomials}

The following theorem is a generalization of the Newton-Girard formula to the monomial symmetric polynomials.

\begin{thm}\label{mainthm}
	Let $\lambda:=\left(a_1^{r_1},\ldots,a_k^{r_k}\right)$ be a partition with $l(\lambda)\leq n$, $a_1>\cdots>a_k$ and let $\ba:=\left(a_1,\ldots,a_k\right)\in\N^k$, then
    \begin{equation}\label{maineq}
    	l(\lambda)m_{\lambda}=\sum_{\bu}(-1)^{|\bu |-1}\binom{|\bu|}				{u_1,\ldots u_k} p_{\ba\cdot\bu}m_{\left(a_1^{r_1-							u_1},\ldots,a_k^{r_k-u_k}\right)}
    \end{equation}
    where the sum is over all nonzero weak compositions $\bu:=\left(u_1,\ldots,u_k\right)\in\z0^k$ with $u_i\leq r_i$ for all $i\in\{1,\ldots,k\}$.
\end{thm}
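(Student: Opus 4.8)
The plan is to expand both sides of \eqref{maineq} in the monomial basis $\left\{x^{\mathbf{c}}:\mathbf{c}\in\z0^n\right\}$, with $x^{\mathbf{c}}:=x_1^{c_1}\cdots x_n^{c_n}$, and to check that every monomial occurs with equal coefficient on the two sides; this follows the structure of the proof of Theorem~\ref{NGthm} in \cite{mead}. Since $\left|\left(a_1^{r_1-u_1},\ldots,a_k^{r_k-u_k}\right)\right|=|\lambda|-\ba\cdot\bu$, every summand $p_{\ba\cdot\bu}m_{(a_1^{r_1-u_1},\ldots)}$ on the right, and also $l(\lambda)m_\lambda$, is homogeneous of degree $|\lambda|$, so only monomials $x^{\mathbf{c}}$ with $|\mathbf{c}|=|\lambda|$ need be considered. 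For fixed $\bu$, the coefficient of $x^{\mathbf{c}}$ in $p_{\ba\cdot\bu}m_{(a_1^{r_1-u_1},\ldots)}$ is the number of indices $j$ for which decreasing $c_j$ by $\ba\cdot\bu$ yields, after padding with zeros, a permutation of $\left(a_1^{r_1-u_1},\ldots,a_k^{r_k-u_k}\right)$; call this number $n_{\bu}(\mathbf{c})$. Thus the right-hand side of \eqref{maineq} contributes $\sum_{\bu}(-1)^{|\bu|-1}\binom{|\bu|}{u_1,\ldots,u_k}n_{\bu}(\mathbf{c})$ to the coefficient of $x^{\mathbf{c}}$, and the task is to show that this equals $l(\lambda)$ when $x^{\mathbf{c}}$ occurs in $m_\lambda$ and $0$ otherwise.

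The structural observation driving the argument is that if decreasing $c_j$ by $\ba\cdot\bu$ gives a permutation of $\left(a_1^{r_1-u_1},\ldots\right)$, then every entry of $\mathbf{c}$ other than $c_j$ lies in $\{0,a_1,\ldots,a_k\}$. I would split into cases accordingly. If $\mathbf{c}$ has at least two entries outside $\{0,a_1,\ldots,a_k\}$ then $n_{\bu}(\mathbf{c})=0$ for all $\bu$, and $x^{\mathbf{c}}$ occurs in neither side. If every entry of $\mathbf{c}$ lies in $\{0,a_1,\ldots,a_k\}$, write its multiset of entries as $\left\{a_1^{f_1},\ldots,a_k^{f_k},0^{f_0}\right\}$; when $f_i=r_i$ for all $i$—so that $x^{\mathbf{c}}$ occurs in $m_\lambda$—a direct check shows that the only data contributing to $n_{\bu}(\mathbf{c})$ are $\bu$ equal to the $m$th standard basis vector of $\z0^k$ together with one of the $r_m$ positions $j$ with $c_j=a_m$, and summing over $m$ gives coefficient $\sum_m r_m=l(\lambda)$, as required. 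The remaining two cases—$\mathbf{c}$ with exactly one entry outside $\{0,a_1,\ldots,a_k\}$, and $\mathbf{c}$ with all entries inside but $(f_i)\ne(r_i)$—are exactly those where $x^{\mathbf{c}}$ does not occur in $m_\lambda$, and there the right-hand coefficient must be shown to vanish.

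In each of those two cases the plan is to pin down the set of $\bu$ with $n_{\bu}(\mathbf{c})\ne0$. Either it is empty—which happens, for instance, as soon as $f_i>r_i$ holds for two indices $i$, or $f_{i_0}\ge r_{i_0}+2$ for a single index $i_0$—in which case the coefficient is trivially $0$; or it consists of one distinguished composition $\bu_0=(u_1,\ldots,u_k)$ read off from $\mathbf{c}$, together with the compositions $\bu_0-\mathbf{1}_l$ for $l$ in an index set $S$, where $\mathbf{1}_l$ is the $l$th standard basis vector; moreover $n_{\bu}(\mathbf{c})$ then takes one common positive value $t$ on all of these. Writing $N=|\bu_0|$ and using $(-1)^{|\bu_0-\mathbf{1}_l|-1}=-(-1)^{N-1}$, the right-hand coefficient of $x^{\mathbf{c}}$ collapses to
\[
t\,(-1)^{N-1}\left[\binom{N}{u_1,\ldots,u_k}-\sum_{l\in S}\binom{N-1}{u_1,\ldots,u_l-1,\ldots,u_k}\right],
\]
and this vanishes by the multinomial form of Pascal's recurrence provided $S=\{\,l:u_l\ge1\,\}$.

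Establishing $S=\{\,l:u_l\ge1\,\}$ is where the hypothesis $a_1>\cdots>a_k$ enters, and I expect this to be the main obstacle. The composition $\bu_0$ is characterized by an equation of the form $A=\sum_l u_l a_l$, where $A$ is the unique ``large'' entry of $\mathbf{c}$ (the entry outside $\{0,a_1,\ldots,a_k\}$, or, in the other case, a suitable $a_{i_0}$). An index $l$ can belong to $S$ only when the power-sum subscript $\ba\cdot(\bu_0-\mathbf{1}_l)=A-a_l$ is nonnegative, i.e.\ $a_l\le A$; conversely, if some index $l$ with $a_l>A$ had $u_l\ge1$, then $\sum_l u_l a_l\ge u_l a_l>A$, contradicting $A=\sum_l u_l a_l$. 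Hence the support of $\bu_0$ lies among the indices $l$ with $a_l\le A$, which is precisely the range in which $S$ lies, and the two sets coincide. With this identification the displayed bracket vanishes, and the only remaining work is the bookkeeping confirming that $\bu_0$ and the $\bu_0-\mathbf{1}_l$ really are the only contributing compositions and that $n_{\bu}(\mathbf{c})$ is constant on them; assembling the cases then yields \eqref{maineq}.
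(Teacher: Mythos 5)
Your argument is correct, but it takes a genuinely different route from the paper's. The paper never leaves the basis of monomial symmetric polynomials: it quotes a product rule (a special case of Proposition 2.4 of Galetto) writing $p_am_\mu$ as an explicit sum of $m_\nu$'s, substitutes this into the right-hand side of \eqref{maineq}, and organizes a cancellation among the four resulting families of terms, with the multinomial Pascal recurrence entering when the sums over $i$ are reindexed. You instead expand everything in the monomial basis $x^{\mathbf{c}}$ of $\Fn$ and check the identity coefficient by coefficient; the same recurrence $\binom{N}{u_1,\ldots,u_k}=\sum_{l:\,u_l\ge1}\binom{N-1}{u_1,\ldots,u_l-1,\ldots,u_k}$ does the work, but localized at a single exponent vector, where the contributing compositions collapse to $\{\bu_0\}\cup\{\bu_0-\mathbf{1}_l: l\in S\}$. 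This is arguably closer in spirit to Mead's original monomial-counting proof than the paper's own argument is, and it trades the Kronecker-delta bookkeeping of the paper's Lemma for the cleaner statement that $n_{\bu}(\mathbf{c})$ is constant on the contributing set. The cost is the case analysis on $\mathbf{c}$: the deferred bookkeeping does go through, but you should record that in the case where all entries of $\mathbf{c}$ lie in $\{0,a_1,\ldots,a_k\}$ with multiplicities $f_i\ne r_i$, contributions occur only when exactly one index $i_0$ has $f_{i_0}=r_{i_0}+1$ and all other $f_i\le r_i$ (forced by $\sum_i(r_i-f_i)a_i=0$), in which case $\bu_0=(r_1-f_1,\ldots,r_{i_0}-f_{i_0}+1,\ldots,r_k-f_k)$, $A=a_{i_0}$ and $t=f_{i_0}$. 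Your key observation---that positivity and distinctness of the $a_i$ force the support of $\bu_0$ into $\{l: a_l\le A\}$, so that $S$ is exactly $\{l:(\bu_0)_l\ge1\}$ and the bracketed expression vanishes---is precisely the point on which the whole computation turns, and it is sound.
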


The proof will follow the structure of that in \cite{mead}. Before proving this theorem we state a lemma which allows one to write a product of a power sum symmetric polynomial and a monomial symmetric polynomial as a linear combination of monomial symmetric polynomials. It is a special case of Proposition 2.4 in \cite {gal}.

\begin{lem} Given $a\in\N$ and a partition $\lambda:=\left(b_1^{s_1},\ldots,b_k^{s_k}\right)$
	\begin{align*}
    	p_am_\lambda&=m_{\left(a,b_1^{s_1},\ldots,b_k^{s_k}\right)}+\sum_{i=1}^k m_{\left(a+b_i,b_1^{s_1},\ldots,b_{i-1}^{s_{i-1}},b_i^{s_i-1},b_{i+1}^{s_{i+1}},\ldots,b_k^{s_k}\right)}+\sum_{j=1}^k\delta_{a,b_j}s_j m_{\left(b_1^{s_1},\ldots,b_{j-1}^{s_{j-1}},b_j^{s_j+1},b_{j+1}^{s_{j+1}},\ldots,b_k^{s_k}\right)}\\
    &+\sum_{p,q=1}^k\delta_{a+b_p,b_q}s_q m_{\left(a+b_i,b_1^{s_1},\ldots,b_{q-1}^{s_{q-1}},b_q^{s_q+1},b_{q+1}^{s_{q+1}},\ldots,b_{p-1}^{s_{p-1}},b_p^{s_p-1},b_{p+1}^{s_{p+1}},\ldots,b_k^{s_k}\right)}
    \end{align*}
\end{lem}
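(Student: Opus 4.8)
The plan is to expand the left-hand side directly in the monomial basis and collect terms by shape, exactly as one does for the elementary case in \cite{mead}. Write $p_am_\lambda=\sum_{i=1}^n x_i^a m_\lambda$, and recall that $m_\lambda=\sum_\alpha x^\alpha$ where $\alpha$ runs over the distinct exponent vectors in $\z0^n$ that are permutations of $(b_1,\ldots,b_k,0,\ldots,0)$ (with $n-l(\lambda)$ trailing zeros). Hence
\[
p_am_\lambda=\sum_{i=1}^n\sum_\alpha x^{\alpha+a\be_i},
\]
a sum of monomials, where $\be_i$ is the $i$th standard basis vector. Since $p_am_\lambda\in R$ is symmetric it is a $\z0$-linear combination $\sum_\mu c_\mu m_\mu$, and by the $S_n$-symmetry of the whole expression, $c_\mu$ equals the number of pairs $(i,\alpha)$ in the double sum that produce any one fixed monomial of shape $\mu$ — most conveniently $x^\mu$ with exponents in non-increasing order. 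So the whole proof reduces to: for each partition $\mu$ that can occur, count the coordinates of $x^\mu$ from which one may subtract $a$ and be left with a permutation of $\lambda$.

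Next I would classify the shapes $\mu$ that arise. Adding $a$ to the $i$th coordinate of a permutation of $\lambda$ either (i) turns a $0$ into an $a$, yielding the partition obtained from $\lambda$ by adjoining a new part $a$ — this produces $m_{(a,b_1^{s_1},\ldots,b_k^{s_k})}$; or (ii) turns an entry equal to $b_i$ into $a+b_i$, yielding the partition obtained from $\lambda$ by deleting one part $b_i$ and adjoining a part $a+b_i$ — this produces the terms $m_{(a+b_i,\ldots,b_i^{s_i-1},\ldots)}$. These two families account for the first two sums. The last two sums come from the multiplicity count in the reduction above: when $a$ coincides with some part $b_j$, the partition $(a,b_1^{s_1},\ldots,b_k^{s_k})$ actually has $b_j$ with multiplicity $s_j+1$, so a fixed monomial of that shape has $a$ subtractable from $1+s_j$ coordinates rather than one, which is recorded by $\delta_{a,b_j}s_j$; similarly, when $a+b_p$ coincides with some $b_q$, a "bumped" partition acquires extra copies of $b_q$, recorded by $\delta_{a+b_p,b_q}s_q$ (this is the content of the fourth sum, which handles the double-coincidence case). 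Concretely I would fix the target $\mu$, list every coordinate value $v\ge a$ of $x^\mu$ for which replacing one $v$ by $v-a$ gives a permutation of $\lambda$, sum the number of coordinates equal to $v$ over all such $v$, and match the total against the coefficient predicted by the right-hand side.

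The main obstacle is exactly this bookkeeping: showing that the four sums on the right cover every contributing monomial once with the correct multiplicity, with no double counting or omission. The delicate points are the overlap cases — a partition $\mu$ that is simultaneously of "adjoin $a$" type and of "bump $b_i$ to $a+b_i$" type, which forces several Kronecker deltas to fire together — and confirming that the multiplicity factors $s_j$, $s_q$ are precisely the number of extra coordinates created by the coincidences. I would first check $k=1$ and the two extreme cases ($a$ equal to a part of $\lambda$ versus $a$ not a part and $a+b_i$ never a part) to pin down the pattern, and then carry out the finite, if intricate, case analysis in general. Alternatively, the lemma can simply be obtained by specializing Proposition~2.4 of \cite{gal}, but the self-contained count above keeps the argument in the spirit of \cite{mead} and feeds directly into the proof of Theorem~\ref{mainthm}.
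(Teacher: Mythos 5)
The paper does not actually prove this lemma; it is quoted as a special case of Proposition 2.4 of \cite{gal} and used as a black box. Your direct expansion is therefore a genuinely different, self-contained route, and it is the right one: it buys independence from \cite{gal} at the cost of the multiplicity bookkeeping you describe. That bookkeeping does close. Fix a partition $\mu$ and a single monomial $x^\beta$ of shape $\mu$; the coefficient of $m_\mu$ in $p_am_\lambda$ is the number of coordinates $\ell$ with $\beta_\ell\geq a$ such that $\beta-a\be_\ell$ is a permutation of $\lambda$. Only two families of $\mu$ can occur, namely $\mu=\lambda\cup\{a\}$ and $\mu=\lambda\setminus\{b_i\}\cup\{a+b_i\}$, and (using $a>0$ and the distinctness of the $b_i$) these targets are pairwise distinct as multisets: $\{a,b_i\}=\{a+b_i\}$ is impossible, and $\{a+b_i,b_{i'}\}=\{a+b_{i'},b_i\}$ forces $i=i'$. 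In the first family the contributing coordinates are exactly those with $\beta_\ell=a$, of which there are $1+\sum_j\delta_{a,b_j}s_j$; in the second, exactly those with $\beta_\ell=a+b_i$, of which there are $1+\sum_q\delta_{a+b_i,b_q}s_q$; no coordinate of any other value can contribute, by a one-line multiset cardinality check. Splitting each multiplicity as $1+(\text{excess})$ and noting that when $a=b_j$ (resp.\ $a+b_p=b_q$) the two partitions being named coincide, you recover the four sums of the statement, so the ``delicate overlap cases'' you flag are precisely the two $\delta$ sums and nothing more. One caveat your count exposes: the fourth sum as printed carries a stray part $a+b_i$ with an unbound index $i$; the correct partition there is $\left(b_1^{s_1},\ldots,b_q^{s_q+1},\ldots,b_p^{s_p-1},\ldots,b_k^{s_k}\right)$ with no extra part adjoined, since $a+b_p=b_q$ is already recorded by raising the exponent of $b_q$. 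Do not try to match the literal printed expression; your derivation gives the intended (and correct) one.
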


We now prove Theorem \ref{mainthm} using this Lemma.

\begin{proof}[Proof of Theorem \ref{mainthm}]
By the Lemma:
\begin{align*}\label{maineq}
    	&\sum_{\bu}(-1)^{|\bu|-1}\binom{|\bu|}{u_1,\ldots u_k}p_{\ba\cdot\bu}m_{\left(a_1^{r_1-u_1},\ldots,a_k^{r_k-u_k}\right)}\\
        &=\sum_{\bu\neq\br}(-1)^{|\bu|-1}\binom{|\bu|}{u_1,\ldots u_k}\Bigg(m_{\left(\ba\cdot\bu,a_1^{r_1-u_1},\ldots,a_k^{r_k-u_k}\right)}+\sum_{i=1}^k m_{\left(\ba\cdot\bu+a_i,a_1^{r_1-u_1},\ldots,a_i^{r_i-u_i-1},\ldots,a_k^{r_k-u_k}\right)}\\
        &\hskip.5in+\sum_{j=1}^k\delta_{\ba\cdot\bu,a_j}\left(r_j-u_j\right) m_{\left(a_1^{r_1-u_1},\ldots,a_j^{r_j-u_j+1},\ldots,a_k^{r_k-u_k}\right)}\\
      &\hskip.5in+\sum_{x,y=1}^k\delta_{\ba\cdot\bu+a_x,a_y}\left(r_y-u_y\right) m_{\left(a_1^{r_1-u_1},\ldots,a_y^{r_y-u_y+1},\ldots,a_x^{r_x-u_x-1},\ldots,a_k^{r_k-u_k}\right)}\Bigg)+(-1)^{|\br|-1}\binom{|\br|}{r_1,\ldots,r_k}p_{\ba\cdot\br}\\
        &=\sum_{\bu\neq\br}(-1)^{|\bu|-1}\binom{|\bu|}{u_1,\ldots u_k}m_{\left(\ba\cdot\bu,a_1^{r_1-u_1},\ldots,a_k^{r_k-u_k}\right)}\\
        &\hskip.5in+\sum_{\bu\neq\br}(-1)^{|\bu|-1}\binom{|\bu|}{u_1,\ldots u_k}\sum_{i=1}^k m_{\left(\ba\cdot\bu+a_i,a_1^{r_1-u_1},\ldots,a_i^{r_i-u_i-1},\ldots,a_k^{r_k-u_k}\right)}+(-1)^{|\br|-1}\binom{|\br|}{r_1,\ldots,r_k}p_{\ba\cdot\br}\\
        &\hskip.5in+\sum_{\bu\neq\br}(-1)^{|\bu|-1}\binom{|\bu|}{u_1,\ldots u_k}\sum_{j=1}^k\delta_{\ba\cdot\bu,a_j}\left(r_j-u_j\right) m_{\left(a_1^{r_1-u_1},\ldots,a_j^{r_j-u_j+1},\ldots,a_k^{r_k-u_k}\right)}\\
        &\hskip.5in+\sum_{\bu\neq\br}(-1)^{|\bu|-1}\binom{|\bu|}{u_1,\ldots u_k}\sum_{x,y=1}^k\delta_{\ba\cdot\bu+a_x,a_y}\left(r_y-u_y\right) m_{\left( a_1^{r_1-u_1},\ldots,a_y^{r_y-u_y+1},\ldots,a_x^{r_x-u_x-1},\ldots,a_k^{r_k-u_k}\right)}
	\end{align*}
    
    \begin{align*}
        &=\sum_{\bu\neq\br}(-1)^{|\bu|-1}\binom{|\bu|}{u_1,\ldots u_k}m_{\left(\ba\cdot\bu,a_1^{r_1-u_1},\ldots,a_k^{r_k-u_k}\right)}\\
        &\hskip.5in+\sum_{i=1}^k\sum_{\bu\neq\br}(-1)^{|\bu|-1}\binom{|\bu|}{u_1,\ldots u_k}m_{\left(\ba\cdot\bu+a_i,a_1^{r_1-u_1},\ldots,a_i^{r_i-u_i-1},\ldots,a_k^{r_k-u_k}\right)}+(-1)^{|\br|-1}\binom{|\br|}{r_1,\ldots,r_k}p_{\ba\cdot\br}\\
        &\hskip.5in+\sum_{j=1}^k\sum_{\bu\neq\br}(-1)^{|\bu|-1}\binom{|\bu|}{u_1,\ldots u_k}\delta_{\ba\cdot\bu,a_j}\left(r_j-u_j\right) m_{\left(a_1^{r_1-u_1},\ldots,a_j^{r_j-u_j+1},\ldots,a_k^{r_k-u_k}\right)}\\
        &\hskip.5in+\sum_{x,y=1}^k\sum_{\bu\neq\br}(-1)^{|\bu|-1}\binom{|\bu|}{u_1,\ldots u_k}\delta_{\ba\cdot\bu+a_x,a_y}\left(r_y-u_y\right) m_{\left( a_1^{r_1-u_1},\ldots,a_y^{r_y-u_y+1},\ldots,a_x^{r_x-u_x-1},\ldots,a_k^{r_k-u_k}\right)}\\
        &=\sum_{t=1}^km_\lambda+\sum_{\substack{\bu\neq\br\\|\bu|>1}}(-1)^{|\bu|-1}\binom{|\bu|}{u_1,\ldots u_k}m_{\left(\ba\cdot\bu,a_1^{r_1-u_1},\ldots,a_k^{r_k-u_k}\right)}+\sum_{j=1}^k\left(r_j-1\right)m_\lambda\\
        &\hskip.5in+\sum_{i=1}^k\sum_{\bu\neq\br}(-1)^{|\bu|-1}\binom{|\bu|}{u_1,\ldots u_k}m_{\left(\ba\cdot\bu+a_i,a_1^{r_1-u_1},\ldots,a_i^{r_i-u_i-1},\ldots,a_k^{r_k-u_k}\right)}+(-1)^{|\br|-1}\binom{|\br|}{r_1,\ldots,r_k}p_{\ba\cdot\br}
        %+\sum_{j=1}^k\sum_{\substack{\bu\neq\br\\ \bu\neq\be_j}}(-1)^{|\bu|-1}\binom{|\bu|}{u_1,\ldots u_k}\delta_{\ba\cdot\bu,a_j}\left(r_j-u_j\right)m_{\left(a_1^{r_1-u_1},\ldots,a_j^{r_j-u_j+1},\ldots,a_k^{r_k-u_k}\right)}
        \\
        &\hskip.5in+\sum_{x,y=1}^k\sum_{\bu\neq\br}(-1)^{|\bu|-1}\binom{|\bu|}{u_1,\ldots u_k}\delta_{\ba\cdot\bu+a_x,a_y}\left(r_y-u_y\right) m_{\left( a_1^{r_1-u_1},\ldots,a_y^{r_y-u_y+1},\ldots,a_x^{r_x-u_x-1},\ldots,a_k^{r_k-u_k}\right)}\\
        &=km_\lambda+\sum_{\substack{\bu\neq\br\\|\bu|>1}}(-1)^{|\bu|-1}\binom{|\bu|}{u_1,\ldots u_k}m_{\left(\ba\cdot\bu,a_1^{r_1-u_1},\ldots,a_k^{r_k-u_k}\right)}\\
        &\hskip.5in+\sum_{i=1}^k\sum_{|\bu|>1}(-1)^{|\bu|-2}\binom{|\bu|-1}{u_1,\ldots,u_i-1,\ldots,u_k}m_{\left(\ba\cdot\bu,a_1^{r_1-u_1},\ldots,a_k^{r_k-u_k}\right)}+(-1)^{|\br|-1}\binom{|\br|}{r_1,\ldots,r_k}p_{\ba\cdot\br}\\
        &\hskip.5in+l(\lambda)m_\lambda-km_\lambda%+\sum_{j=1}^k\sum_{\substack{\bu\neq\br\\ \bu\neq\be_j}}(-1)^{|\bu|-1}\binom{|\bu|}{u_1,\ldots u_k}\delta_{\ba\cdot\bu,a_j}\left(r_j-u_j\right) m_{\left(a_1^{r_1-u_1},\ldots,a_j^{r_j-u_j+1},\ldots,a_k^{r_k-u_k}\right)}
        \\
        &\hskip.5in+\sum_{x,y=1}^k\sum_{|\bu|>1}(-1)^{|\bu|-2}\binom{|\bu|-1}{u_1,\ldots,u_x-1,\ldots,u_k}\delta_{\ba\cdot\bu,a_y}\left(r_y-u_y+\delta_{x,y}\right)m_{\left(a_1^{r_1-u_1},\ldots,a_y^{r_y-u_y+1},\ldots,a_k^{r_k-u_k}\right)}\\
        &=l(\lambda)m_\lambda+\sum_{\substack{\bu\neq\br\\|\bu|>1}}(-1)^{|\bu|-1}\binom{|\bu|}{u_1,\ldots u_k}m_{\left(\ba\cdot\bu,a_1^{r_1-u_1},\ldots,a_k^{r_k-u_k}\right)}\\
        &\hskip.5in+\sum_{|\bu|>1}(-1)^{|\bu|-2}\binom{|\bu|}{u_1,\ldots,u_k}m_{\left(\ba\cdot\bu,a_1^{r_1-u_1},\ldots,a_k^{r_k-u_k}\right)}+(-1)^{|\br|-1}\binom{|\br|}{r_1,\ldots,r_k}p_{\ba\cdot\br}\\
        &=l(\lambda)m_\lambda+\sum_{\substack{\bu\neq\br\\|\bu|>1}}(-1)^{|\bu|-1}\binom{|\bu|}{u_1,\ldots u_k}m_{\left(\ba\cdot\bu,a_1^{r_1-u_1},\ldots,a_k^{r_k-u_k}\right)}+(-1)^{|\br|-1}\binom{|\br|}{r_1,\ldots,r_k}p_{\ba\cdot\br}\\
        &\hskip.5in+\sum_{\substack{\bu\neq\br\\ |\bu|>1}}(-1)^{|\bu|-2}\binom{|\bu|}{u_1,\ldots,u_k}m_{\left(\ba\cdot\bu,a_1^{r_1-u_1},\ldots,a_k^{r_k-u_k}\right)}+(-1)^{|\br|-2}\binom{|\br|}{r_1,\ldots,r_k}p_{\ba\cdot\br}\\
        &=l(\lambda)m_\lambda
	\end{align*}
\end{proof}

\section{An Application to Lie Algebras}

%The Lie algebra $\mathfrak{sl}_2$ consists of the traceless $2\times2$ matrices with complex entries. Its Lie bracket is the commutator bracket $[A,B]=AB-BA$. 
The current algebra for the Lie algebra $\mathfrak{sl}_2$ is the vector space $\mathfrak{sl}_2\otimes\mathbb{C}[t]$ with Lie bracket given by extending the bracket
$$[x\otimes f,y\otimes g]=[x,y]\otimes fg=(xy-yx)\otimes fg$$
bilinearly.

For a Lie algebra $\mathfrak{a}$ let $U(\mathfrak{a})$ represent its universal enveloping algebra. Then the abelian algebra $U(h\otimes\mathbb{C}[t])$ is isomorphic to the algebra of symmetric functions in $n$ variables via the map $\left(h\otimes t^j\right)\mapsto p_j$.

In 1978, Howard Garland, while investigating the representation theory of the loop algebras, defined the elements $\Lambda_r\in U(\mathfrak{h}\otimes\mathbb{C}[t])$, \cite{G}. These elements satisfy the relation
    \begin{equation}\label{Gar}
    	k\Lambda_{k}=\sum_{i=1}^k(-1)^{i-1}(h\otimes t^i)\Lambda_{k-i}.
    \end{equation}
Therefore, the Newton-Girard formula implies that under the isomorphism above $\Lambda_k\mapsto e_k$.

In 2013, the first author, while investigating the representation theory of the map algebras, defined the elements $p(\lambda)\in U(\mathfrak{h}\otimes\mathbb{C}[t])$, \cite{cha}. These elements satisfy the relation
    \begin{equation}\label{Cha}
    	l(\lambda)p(\lambda)=\sum_{\bu}(-1)^{|\bu |-1}\binom{|\bu|}			{u_1,\ldots u_k}(h\otimes t^{\ba\cdot\bu})p\left(a_1^{r_1-u_1},\ldots,a_k^{r_k-u_k}\right).
    \end{equation}
Therefore, by Theorem \ref{mainthm} under the isomorphism above $p(\lambda)\mapsto m(\lambda)$.

\end{document}